\documentclass{article}

\usepackage{arxiv}

\usepackage{amsmath,amsfonts,amssymb}
\usepackage{tabularx,booktabs}
\usepackage{colortbl}
\usepackage{mathtools}

\usepackage[textsize=small,textwidth=3.3cm]{todonotes}

\usepackage[utf8]{inputenc} \usepackage[T1]{fontenc}    \usepackage{hyperref}       \usepackage{url}            \usepackage{booktabs}       \usepackage{amsfonts}       \usepackage{nicefrac}       \usepackage{microtype}      \usepackage{lipsum}         \usepackage{graphicx}
\usepackage{doi}
\usepackage{xspace}
\usepackage[linesnumbered,ruled,vlined]{algorithm2e}

\SetCommentSty{AlgoCommentStyle}

\usepackage{tikz}
\usepackage{tikz-3dplot}
\usepackage{tikzscale}
\usetikzlibrary{matrix,chains,scopes,positioning,arrows,fit}

\usepackage{array}
\usepackage{amsthm}

\newtheorem{lemma}{Lemma}
\newtheorem{example}{Example}
\newtheorem{definition}{Definition}

\newcommand{\lincons}[1]{a_{#1}}
\newcommand{\rhs}{b}

\newcommand{\slack}[2]{\mathit{slack}({#1,#2})}
\newcommand{\reasoncon}{C_{\text{reason}}}
\newcommand{\conflictcon}{C_{\text{confl}}}
\newcommand{\learnedcon}{C_{\text{learn}}}

\newcommand{\NN}{\mathcal{N}}

\newcommand{\Z}{\mathbb{Z}}
\newcommand{\Zpos}{\mathbb{Z}_{\ge 0}}
\newcommand{\Zspos}{\mathbb{Z}_{>0}}

\newcommand{\literal}[1]{\ell_{#1}}

\newcommand{\var}[1]{x_{#1}}
\newcommand{\nvar}[1]{\overline{x}_{#1}}
\newcommand{\lit}{\ell_{r}}
\newcommand{\nlit}{\bar{\ell}_{r}}

\newcommand{\divides}{\mid}
\newcommand{\notdivides}{\nmid}

\newcommand{\seq}[1]{[1,\ldots,#1]}

\newcommand{\ie}{i.e.,\xspace}
\newcommand{\eg}{e.g.,\xspace}

\newcommand{\solver}[1]{\textsc{#1}\xspace}

\newcommand{\scipversion}{8.0.3}

\newcommand{\scip}{\solver{SCIP}}

\newcommand{\soplex}{\solver{SoPlex}}

\newcommand{\miplib}{\textsc{MIPLIB}\xspace}

\newcommand\vfloor[1]{\left\lfloor#1\right\rfloor}
\newcommand\vceil[1]{\left\lceil#1\right\rceil}

\newcommand{\fa}{\text{ for all }}

\definecolor{tabcolor}{HTML}{6666AA}
\definecolor{f1}{HTML}{000060}
\definecolor{f2}{HTML}{0000FF}
\definecolor{f3}{HTML}{36648B}
\definecolor{f4}{HTML}{4682B4}
\definecolor{f5}{HTML}{5CACEE}
\definecolor{f6}{HTML}{00FFFF}
\definecolor{f7}{HTML}{00DD99}
\definecolor{f8}{HTML}{008888}
\definecolor{f9}{HTML}{000000}

\usepackage{cleveref}        
\title{Improving Conflict Analysis in MIP Solvers by Pseudo-Boolean
Reasoning}

\author{ \href{https://orcid.org/0000-0003-0964-9802}{\includegraphics[scale=0.06]{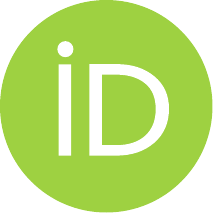}
\hspace{1mm}Gioni Mexi}\thanks{Zuse Institute Berlin, Germany} \\
	\texttt{mexi@zib.de} \\
\And
	\href{https://orcid.org/0000-0002-6320-8154}{\includegraphics[scale=0.06]{orcid.pdf}\hspace{1mm}Timo Berthold} \thanks{
	Fair Isaac Deutschland GmbH and TU Berlin, Germany} \\
	\texttt{timoberthold@fico.com} \\
	\And
	\href{https://orcid.org/0000-0003-0391-5903}{\includegraphics[scale=0.06]{orcid.pdf}\hspace{1mm}Ambros Gleixner} \thanks{
	HTW Berlin and Zuse Institute Berlin, Germany} \\
	\texttt{gleixner@htw-berlin.de} \\
	\And
	\href{https://orcid.org/0000-0002-2700-4285}{\includegraphics[scale=0.06]{orcid.pdf}\hspace{1mm}Jakob Nordström}\thanks{
	University of Copenhagen, Denmark and Lund University, Sweden} \\
	\texttt{jn@di.ku.dk} \\
}

\hypersetup{
pdftitle={A template for the arxiv style},
pdfsubject={q-bio.NC, q-bio.QM},
pdfauthor={Gioni Mexi, Timo Berthold, Timo Berthold, Timo Berthold},
pdfkeywords={First keyword, Second keyword, More},
}

\begin{document}
\maketitle

\begin{abstract}
  Conflict analysis has been successfully generalized from Boolean
  satisfiability (SAT) solving to mixed integer programming (MIP)
  solvers, but although MIP solvers operate with general linear
  inequalities, the conflict analysis in MIP has been limited to
  reasoning with the more restricted class of clausal constraint.
  This is in contrast to how conflict analysis is performed in
  so-called pseudo-Boolean solving, where solvers can reason directly
  with \mbox{$0$--$1$} integer linear inequalities rather than with
  clausal constraints extracted from such inequalities.
  
  In this work, we investigate how pseudo-Boolean conflict analysis
  can be integrated in MIP solving, focusing on \mbox{$0$--$1$}
  integer linear programs (\mbox{$0$--$1$} ILPs).  Phrased in MIP
  terminology, conflict analysis can be understood as a sequence of
  linear combinations and cuts. We leverage this perspective to design
  a new conflict analysis algorithm based on mixed integer rounding
  (MIR) cuts, which theoretically dominates the state-of-the-art
  division-based method in pseudo-Boolean solving.
  
  We also report results from a first proof-of-concept implementation
  of different pseudo-Boolean conflict analysis methods in the
  open-source MIP solver \solver{SCIP}.  When evaluated on a large and
  diverse set of \mbox{$0$--$1$} ILP instances from \miplib2017,
  our new MIR-based conflict analysis outperforms both previous
  pseudo-Boolean methods and the clause-based method used in MIP.
  Our conclusion is that pseudo-Boolean conflict analysis in MIP is a
  promising research direction that merits further study, and that it
  might also make sense to investigate the use of such conflict
  analysis to generate stronger no-goods in constraint programming. 
\end{abstract}

\keywords{Integer programming \and pseudo-Boolean reasoning \and conflict analysis \and cutting planes proof system \and mixed integer rounding \and division \and saturation}

\section{Introduction}
\label{sec:introduction}

The area of Boolean satisfiability (SAT) solving has witnessed
dramatic performance improvements over the last couple of decades, and
several techniques from SAT have also inspired developments for other
combinatorial optimization paradigms such as SAT-based and (linear)
pseudo-Boolean optimization, constraint programming, and mixed
integer programming.
In particular, conflict analysis as introduced in the works on
\emph{conflict-driven clause learning (CDCL)}~\cite{BS97UsingCSP,MS99Grasp,MMZZM01Engineering}
ushering in the modern SAT solving revolution has been picked up and
generalized in different ways
to these more general settings.
Interestingly, precursors of this version of conflict analysis and nonchronological backtracking
can be traced back all the way to early work in the AI community~\cite{stallman1977forward},
and related ideas have been used in constraint programming for
decades~\cite{ginsberg1993dynamic,jiang1994nogood}.  Our focus in this
paper is on conflict analysis in mixed integer programming and
pseudo-Boolean optimization, which we proceed to discuss next.

\subsection{Mixed Integer Programming
  and Conflict Analysis
}
\label{sec:clause-based-conflict-analysis}

The core method of mixed integer programming (MIP) is that a linear
programming (LP) relaxation of the problem is fed to an LP solver.
If the LP solver finds a solution that assigns real values to
integral variables, then either additional cut constraints can be
generated that eliminate such solutions, or the problem can be split
into subproblems by branching on integer variables, generating new
nodes in the search tree.
During the solving process infeasible nodes in the search tree are
pruned.  Unlike in SAT, there can be different reasons for
backtracking due to
infeasibility of the  LP relaxation,
node presolving  (propagation),
or to the current objective value of the relaxed problem being
worse than the best solution found so far
(branch-and-bound).
MIP solvers employ a multitude of further techniques
such as
symmetry detection, disjoint
subtree detection, restarts, et cetera. For a comprehensive
description of  MIP solving we refer the reader to,
\eg~\cite{achterberg2007constraint}.

The use of SAT techniques in MIP solvers has been a fruitful direction
of research over the last decades. Specifically,  CDCL conflict analysis
has proven to be a useful tool to enhance the performance of MIP
solvers by learning constraints from infeasibilities detected by
propagation or from the LP relaxation~\cite{achterberg2007conflict,
  sandholm2006nogood, WitzigBertholdHeinz2019}.
However, SAT and MIP solvers differ fundamentally in how they explore
the search space, in that SAT solvers search depth-first, maintaining
only the current state of the search, whereas in MIP the search tree
is generated in a ``best-first'' manner based on careful analysis on
search statistics such as dual bounds and integrality of LP solutions
to subproblems. 
These differences make it harder for MIP solvers to profit from conflict
analysis, and so in contrast to SAT solving, for which this technique
is absolutely crucial, in MIP solving it plays more of a supplemental
if still highly valuable role.

Although the setting is different, the
\emph{graph-based conflict analysis}~\cite{achterberg2007conflict}
used to learn from infeasibilities in MIP is very similar to the
classic SAT approach.
First, a partial assignment
is extracted that 
consists of branching decisions and implications that led to the
infeasibility. If the LP relaxation is infeasible, 
the information which bound changes led to infeasibility
is gathered from the non-zero duals of the LP.
Next, a directed acyclic graph is constructed that encodes information 
about the conflict, in that source nodes correspond to branching
decisions, non-source nodes encode implications, and the sink node
represents the infeasibility. Each cut in this graph that separates
the source nodes from the sink is a valid constraint. It is important
to note that all implications correspond to clausal constraints,
and so this conflict analysis operates not on the linear constraints
of the problem but on clauses extracted from these linear constraints.
(There are also methods that can learn general linear constraints from
infeasibilities, one notable example being
\emph{dual-proof analysis}~\cite{WitzigBertholdHeinz2019},
but this technique is limited to conflicts arising from infeasibility
of the LP relaxation and does not analyze or strengthen the partial assignment that led to infeasibility.)

\subsection{Pseudo-Boolean Solving
  and Conflict Analysis
}

Pseudo-Boolean (PB) solving is another approach specific to integer
linear programs with only binary variables, or \mbox{$0$--$1$} ILPs,
which are referred to as (linear) pseudo-Boolean formulas in the
PB solving literature. While MIP solvers find real-valued solutions and
try to push such solutions closer and closer to integrality, PB
solvers follow the SAT approach of considering only Boolean
assignments and trying to extend partial assignments
to more and more variables
without violating
any constraints. Just as in SAT, this search is performed in a
depth-first manner.

Some PB solvers stick very closely to SAT in that they
immediately translate the  \mbox{$0$--$1$} ILP into
conjunctive normal form (CNF) using auxiliary variables and then
run a standard CDCL SAT solver
\cite{ES06TranslatingPB,MML14Open-WBO,SN15Construction}.
Another approach, which is what is of interest in the context of this
work, is to extend the solvers to reason natively with
\mbox{$0$-$1$} linear inequalities
\cite{CK05FastPseudoBoolean,SS06Pueblo,LP10Sat4j,EN18RoundingSat}.
Such \emph{conflict-driven pseudo-Boolean solvers}
have the potential to run exponentially faster than CDCL-based solvers,
since their conflict analysis method is exponentially stronger than
that used in CDCL SAT solvers.

\providecommand{\varx}{x}
\providecommand{\pbcons}{C}
\providecommand{\cutreason}{R^*}

Since it is crucial for our work to understand the differences between
conflict analysis in MIP and PB solvers,  let us try
to provide a somewhat simplified exposition of PB solving in a
language that is meant to convey a MIP perspective (and where what
follows below is heavily indebted to~\cite{DGN21LearnToRelax}).
During the search phase, the pseudo-Boolean solver always first tries
to extend the current partial solution with any variable assignments that are propagated by
some linear inequality. When no further propagations are possible, the
solver  chooses some unassigned variable and makes a \emph{decision}
to assign this variable $0$ or~$1$, after which it again turns to
propagation. 
This cycle of decisions and propagations repeats until either a
satisfying assignment is found or some $0$--$1$ linear inequality $\pbcons$ is
violated.
In the latter case, the solver switches to the conflict analysis phase,
which works as follows:
\begin{enumerate}

\item
  The linear inequality~$R$ responsible for 
  propagating the last variable~$\varx$ 
  in~$\pbcons$
  to  the ``wrong value''
  from the point of view of~$\pbcons$
  is identified;
  this inequality~$R$ is referred to as 
  the \emph{reason constraint} for~$\varx$.

\item
  A
  \emph{division} or \emph{saturation} rule is
  applied to~$R$ to generate a
modified inequality~$\cutreason$  that
  propagates~$\varx$ tightly to its assigned value
  even when considered over the reals.

\item 
  A new linear constraint~$D$ is computed as the smallest integer
  linear combination of $\cutreason$ and~$\pbcons$ for which the
  variable~$\varx$ cancels and is eliminated. It is not too hard to
  show that it follows from the description above that 
  this constraint~$D$ is violated by the current partial
  assignment of the solver with the value of~$\varx$ removed,
and we can set
$\pbcons := D$ 
and go to step~$1$ again. 
\end{enumerate}
This  continues until a termination criterion
analogous to the
\emph{unique implication point (UIP)} notion used in SAT solving
leads to $D$~being declared as the learned constraint.
At this point, the solver undoes further assignments in reverse
chronological order until~$D$ is no longer violated, and then
switches back to the search phase.
We refer the reader
to the chapter~\cite{BN21ProofCplxSATplusCrossref}
for a more
detailed description of conflict-driven pseudo-Boolean solving
(and to the handbook~\cite{HandbookSAT21} for
an in-depth treatment of SAT and related topics in general).

In contrast to MIP conflict analysis, the algorithm described above
is not phrased in terms of the conflict graph,
but focuses on the syntactic
\emph{resolution}
method
\cite{Blake37Thesis,DP60ComputingProcedure,DLL62MachineProgram,Robinson65Machine-oriented}
employed in CDCL conflict analysis
and harnesses the observation by 
Hooker~\cite{Hooker88Generalized,Hooker92Generalized}
that resolution can be understood as a cut rule and extended to
\mbox{$0$--$1$} integer linear inequalities.
The conflict-graph-based analysis in MIP does not
operate on the reason constraints~$R$ as described above, but instead
on disjunctive clauses extracted from these constraints.
It is not hard to prove formally (appealing
to~\cite{BKS04TowardsUnderstanding,CCT87ComplexityCP,Haken85Intractability})
that this incurs an exponential loss in reasoning power compared to
performing derivations on the linear constraints themselves.

In practice, however, it seems fair to say that current pseudo-Boolean
solvers do not quite deliver on this promise of exponential gains in
performance. Although there are specific problem domains where
PB solvers outperform even commercial MIP
solvers~\cite{LBDEN20VerifyingMultiplication,SDNS20LearnedBNN},
evaluations over larger sets of benchmarks
\cite{berthold2008solving,DGN21LearnToRelax,DGDNS21Cutting}
have demonstrated that
the open-source MIP solver \scip~\cite{bestuzheva2021scip}
tends to be clearly more effective in solving
pseudo-Boolean optimization problems, and is also quite competitive
for decision problems. This is especially so for some decision problems
that are in some sense close to LP-infeasibility---such problems are
almost trivial for MIP solvers, but can be extremely challenging for
pseudo-Boolean solvers~\cite{EGNV18CombinatorialBenchmarks}.

\subsection{Questions Studied in This Work and
  Our Contributions}

Since mixed integer programming solvers and pseudo-Boolean solvers
approach \mbox{$0$--$1$} integer linear problems from quite different
angles, and seem to have complementary performance profiles, it is
natural to ask whether techniques from one of the paradigms can
be used to improve solvers based on the other paradigm.

Some MIP-inspired approaches
have been integrated with success in SAT and PB solvers,
perhaps most recently in~\cite{DGN21LearnToRelax},
where the PB solver 
\solver{RoundingSAT}~\cite{EN18RoundingSat}
makes careful use of the LP solver
\soplex~\cite{bestuzheva2021scip}
to detect infeasibility of LP relaxations and generate cut constraints
(though this paper also  raises many questions that would seem to
merit further study). 
However, in the other direction we are not aware of any work trying to
harness state-of-the-art techniques  from pseudo-Boolean solving
to improve the performance of MIP solvers.

In this work, we consider how the clausal conflict analysis in MIP
solvers can be replaced by pseudo-Boolean reasoning, focusing on 
\mbox{$0$--$1$} integer linear programs.
A key difference between the clausal and pseudo-Boolean conflict
analysis methods is that in the latter algorithm the linear reason
constraint~$R$  propagating a variable assignment might  need to be
modified, or \emph{reduced}, 
to another constraint~$\cutreason$ that propagates tightly also over the reals 
(which is already guaranteed to hold if~$R$ is a clausal constraint).
Viewed from a MIP perspective,
this \emph{reduction} step deriving~$\cutreason$ from~$R$
can be seen to be an application of one of two specific cut rules,
where saturation-based reduction
as in~\cite{LP10Sat4j}
corresponds to coefficient tightening
and division-based reduction
as in~\cite{EN18RoundingSat}
uses Chv\'atal-Gomory cuts.

This observation raises the question of whether more general cuts could also
used to obtain other, and potentially more powerful, reduction methods
for pseudo-Boolean conflict analysis.
The answer turns out to be yes, and, in particular,
we introduce a new reduction algorithm utilizing 
mixed integer rounding (MIR)
cuts~\cite{Gomory1960TR,marchand2001aggregation}.
A theoretical comparison of the MIR-based reduction rule with the
reduction methods currently used in PB solvers show that MIR-based
reduction dominates the division-based method that is considered to be
state of the art in pseudo-Boolean solving,
while saturation-based reduction and MIR-based reduction appear to be
incomparable.

We implement pseudo-Boolean conflict analysis for \mbox{$0$--$1$} ILPs
in the MIP solver \scip, including all three reduction methods
discussed above, and compare these different flavours of PB conflict
analysis with each other as well as with clausal MIP conflict analysis
on a large benchmark set consisting of pure \mbox{$0$--$1$} ILP
instances from \miplib2017. We find that the MIR-based pseudo-Boolean conflict
analysis has the best performance, beating not only the conflict
analysis methods in the PB literature but also the standard
clausal conflict analysis in \scip. Interestingly, the new method is
better measured not only in terms of number of nodes in the search
tree, but also in terms of the number of instances solved,
even though we only provide a proof-of-concept implementation lacking
many of the optimizations that would be included in a full integration
of this method into the \scip codebase.
Although our experimental data cannot provide conclusive evidence as
to what causes this improved performance, we observe that the
constraints learned from pseudo-Boolean conflict analysis seem more
useful in that they take part more actively in propagations 
than constraints obtained by clausal conflict analysis.

\subsection{Organization of This Paper} 

After reviewing preliminaries in Section~\ref{sec:preliminaries},
we give a detailed description of clausal and pseudo-Boolean conflict analysis
for \mbox{$0$--$1$} integer linear programs in
Section~\ref{sec:techniques},
including a discussion of the reduction methods found in the PB
literature and our new version using mixed integer rounding cuts,
and study
how the different reduction
rules compare in theory.
In 
Section~\ref{sec:experiments} we present our experimental results.
We conclude the paper in
Section~\ref{sec:conclusion}
by summarizing our work and discussing direction for future research.

\section{Preliminaries and Notation}
\label{sec:preliminaries}

Let $n \in \Zspos$, and $\NN := \seq{n}$.
We let $\var{i}$ denote Boolean (i.e., $\{0,1\}$-valued) variables
and 
$\literal{i}$ denote literals,
which can be either $\var{i}$ or its negation $\nvar{i} = 1 - \var{i}$. 
A \emph{pseudo-Boolean constraint} is a \mbox{$0$--$1$} integer linear inequality
\begin{equation}
    \label{eq:PB}
    \sum_{ i \in \NN} \lincons{i} \literal{i} \geq \rhs
    \ ,
\end{equation}
where we can assume without loss of generality that
$\lincons{i} \in \Zpos \fa i \in \NN$ and $\rhs \in \Zpos$
(so-called \emph{normalized form}).
We can convert ``$\leq$''-constraints with \mbox{$0$--$1$} variables to 
\mbox{``$\ge$''-constraints} by multiplying the constraint by $-1$
and normalizing, \ie replacing the variables by literals. Moreover,
equalities ``='' can be viewed as syntactic sugar for
two opposing inequalities, which can
also be transformed into normalized pseudo-Boolean format.  In particular, every pure
\mbox{$0$--$1$} integer linear program can be transformed to a normalized pseudo-Boolean
representation.
Note that in \cref{sec:techniques} we develop our theory and algorithms using normalized PB constraints for simplicity of exposition. However, in our actual implementation and experiments (described in \cref{sec:experiments}), we directly operate on general linear constraints.

A (partial) assignment $\rho$ is a (partial) map  from variables  to $\{0,1\}$,
which is extended to literals by respecting the meaning of negation.
We call a literal $\literal{i}$
\emph{falsified} or \emph{false}
if $\rho(\literal{i}) =  0$ and
\emph{satisfied} or \emph{true}
if $\rho(\literal{i}) =  1$. 
If $\rho$ is undefined for a literal, we call the literal \emph{unassigned} or \emph{free}.
A constraint is satisfied under some partial assignment $\rho$ if the respective inequality holds, independently of which values the unassigned literals take,
and is falsified if no assignment to the unassigned literals can make the inequality true.

The slack of a PB constraint $C: \sum_{ i \in \NN} \lincons{i} \literal{i} \geq
\rhs \,$ under a partial assignment $\rho$ is defined as $\slack{C}{\rho} :=
\sum_{ \{i \in \NN \, : \rho(i) \neq 0 \} } \lincons{i} - \rhs$.
With this definition, 
$C$
is falsified under $\rho$ if and only if $\slack{C}{\rho} < 0$. 
For example the constraint $C: 2\nvar{1} + 2\var{2}+ 3\var{3}\geq 4$ is falsified under
the partial assignment $\rho = \{ \var{1}= 1, \var{2} = 0 \} $ since
$\slack{C}{\rho} = -1 < 0$.
For a non-falsified constraint $C$ and an unassigned literal $\literal{i}$ with coefficient $\lincons{i}$, the constraint propagates $\literal{i}$  if and only if $\slack{C}{\rho} < \lincons{i}$.
For instance, the same constraint $C: 2\nvar{1} + 2\var{2}+ 3\var{3}\geq 4$ propagates both variables $\var{2}$ and $\var{3}$ to 1 under the partial assignment $\rho = \{ \var{1}= 1 \} $ since $\slack{C}{\rho} = 1$ is strictly smaller than the coefficients of each of the variables.
A constraint propagates the assignment of a free variable tightly if the slack under the current partial assignment is 0.
For any two pseudo-Boolean
constraints $C$ and $C'$ and partial assignment $\rho$ it holds
that the slack is subadditive, \ie $\slack{C+C'}{\rho} \le \slack{C}{\rho} +
\slack{C'}{\rho}$.
The decision level of a literal $\literal{i}$ under a partial assignment $\rho$
is the number of decisions prior to the fixing of $\literal{i}$.
Note that the first fixing in every decision level is a decision literal.

\section{Conflict Analysis Algorithms}
\label{sec:techniques}
For simplicity, in this section we
present all algorithms
in a pseudo-Boolean framework, where all coefficients and constants are integral, and the proofs of correctness that we provide also make crucial use of this fact. It is important to note that this is not the case in the actual implementation in SCIP, which operates with real-valued coefficients and constants. In fact, one of the challenges in implementing pseudo-Boolean conflict analysis in a MIP framework is that careful thought is required to rephrase the algorithms in such a way that they can deal with real-valued data but are still correct.
Next, we describe the details of conflict analysis algorithms used in PB solvers 
and the different techniques that we consider in this paper.

\subsection{Clausal Conflict Analysis}
\label{sec:clausebased}

To explain the idea of conflict analysis, we first consider the case
where all constraints are clauses.  Conflict analysis begins at the
stage where a conflict clause $\conflictcon$ is falsified by the
current partial assignment $\rho$.  Let $\lit$ be the literal in
$\conflictcon$ that was last propagated to false, and let $\reasoncon$
be the reason clause in chronological order that is responsible for
the propagation, i.e., we have $\conflictcon=C'\vee\lit$ and
$\reasoncon=C''\vee\nlit$.  Using the resolution rule, we can derive
the so-called \textit{resolvent} $C'\vee C''$ as a new learned clause
$\learnedcon$.

Note that, even after removing $\lit$ from the partial assignment
$\rho$, both $C'$ and $C''$ remain falsified: $C'$ because
$\conflictcon=C'\vee\lit$ and $\lit$ were false, and $C''$ because
$\reasoncon=C''\vee\nlit$ propagated.  This is the key invariant of
the algorithm: At any point during the algorithm the resolvent is
falsified by the remaining partial assignment $\rho$.

Hence, we can replace the conflict clause by the resolvent and
continue this process.  At each step either a propagating literal is
removed from $\rho$ or the learned clause is empty (at which point
unsatisfiability is proven) or the last fixed literal is a decision
literal.  In the third case, we have reached a \emph{first unique
implication point} (FUIP) and conflict analysis terminates, with the
final resolvent being the learned clause $\learnedcon$.  With
$\learnedcon$ added, propagation on the previous decision level will
prevent the last infeasible decisions to happen as the search continues.

It is straightforward to apply this algorithm to problems with
$0$--$1$ linear constraints.  Suppose $\sum_{
i \in \NN} \lincons{i} \literal{i} \geq \rhs$ is the initial conflict
constraint falsified under $\rho$, then $\bigvee_{i
: \lincons{i}>0 \land \rho(\ell_i) = 0} \ell_j$ can be used as initial
conflict clause.  Analogously, we can extract at each step a reason
clause from the linear constraint that propagated the last literal and
perform resolution. After terminating at an FUIP, the learned clause
can be added as linear constraint to the solver.

\subsection{PB Conflict Analysis}
As in the clausal version, the main idea of PB conflict analysis is also to find a new constraint that explains the
infeasibility of the current subproblem under a falsifying partial assignment.
\Cref{algo:CA} shows the base algorithm for all variants of PB conflict analysis considered in this paper, using the first
unique implication point (FUIP) learning scheme. 
It is initialized with a falsifying partial assignment $\rho$ and a conflicting constraint
$\conflictcon$ under $\rho$.
First, the learned conflict constraint $\learnedcon$ is set equal to the conflict constraint $\conflictcon$.
In each iteration, we extract the latest literal $\lit$ from $\rho$. 
If the literal assignment was due to propagation of a constraint and the negated literal $\nlit$ occurs in $\conflictcon$, then we extract the reason 
constraint $\reasoncon$ that propagated $\lit$. In line \ref{line:reduce} we ``reduce'' 
the reason constraint such that the resolvent of $\learnedcon$ and the reduced reason
$\reasoncon$ (Line \ref{line:resolve}) that cancel the last literal $\lit$ is still falsified under the remaining partial assignment $\rho$.
The conflict constraint is set to the resolvent and we continue until we reach an
FUIP ($\learnedcon$ is \textit{asserting}) or we prove  $\learnedcon$ makes the problem infeasible.
We have reached an FUIP if $\learnedcon$ would propagate some literal after removing at least all literal assignments in the current decision level from $\rho$.
We have shown that the problem is infeasible if $\learnedcon$ is falsified under an empty partial assignment $\rho$.
At this point, the learned constraint can be added to the constraint database of our problem to prevent the solver from exploring the same search space again.

\begin{algorithm}[h]
    \DontPrintSemicolon
	\SetKwInOut{Input}{Input}\SetKwInOut{Output}{Output}
	\SetKwInOut{Init}{Initialization}
    \SetKwFunction{reduce}{reduce}
    \SetKwFunction{resolve}{resolve}
    \SetArgSty{upshape}
    \SetKwFunction{pop}{pop}
    \SetKwFunction{reason}{reason}
	\Input{conflict constraint $\conflictcon$, falsifying partial assignment $\rho$}
	\Output{learned conflict constraint $\learnedcon$}
	$\learnedcon \leftarrow \conflictcon$ \;
    \While{$\learnedcon$ not asserting \textbf{and} $\learnedcon \neq  \, \perp$\label{line:startloopCA}}
	{   
        $\lit \leftarrow$ literal last assigned on $\rho$\;
        \If{ $\lit$ propagated \textbf{and} $\nlit$ occurs in \text{$\learnedcon$}}
        {
            $\reasoncon \leftarrow \reason(\lit, \rho )$\;
            $\reasoncon \leftarrow \reduce(\reasoncon,\learnedcon, \lit, \rho)$\label{line:reduce}\;
            $\learnedcon \leftarrow \resolve(\learnedcon,\reasoncon,\lit)$\label{line:resolve}\;
        }
        $\rho \leftarrow \rho \setminus \{\lit\}$\;
   }
 	\Return{$\learnedcon$}\;

	\caption{Pseudo-Boolean Conflict Analysis Algorithm \label{algo:CA}}
\end{algorithm}

The key invariant of \Cref{algo:CA} is that in each iteration the
resolvent $\learnedcon$ remains falsified. In the clausal version this
holds even without the reduction step in line \ref{line:reduce}.
However, for general linear constraints this is not the case, as shown
by the next example.

\begin{example}
   \label{ex:noconflict}
   Consider the two PB constraints $\reasoncon = \var{1} + \var{2} +2x_3 \ge 2$ and
$\conflictcon = \var{1} + 2\nvar{3} + \var{4} + \var{5} \ge 3$ and the partial assignment
   $\rho = \{\var{1} = 0, \var{3} = 1\}$ where $\var{1} = 0$ is a decision, and $\var{3} = 1$ is propagated by $\reasoncon$. Under $\rho$ the constraint $\conflictcon$ is
   falsified. Applying generalized resolution to cancel $\var{3}$ yields the
   constraint $2x_1 + \var{2} + \var{4} + \var{5} \ge 3$ which is not falsified under $\rho$.
\end{example}

In the following sections, we present three different reduction techniques for
\Cref{algo:CA} that operate directly on PB constraints. 
The main idea is to apply valid operations on the reason constraint to reduce the slack and ensure that the resolvent will have negative slack.
The two main ingredients of the reduction techniques are \textit{weakening} and \textit{cutting planes} and are applied to the reason constraint until the invariant is fulfilled.

Weakening a literal in a PB constraint simply sets it to 1. For example, weakening a constraint $C: \var{1} + \var{2} +2x_3 \ge 2$ on $\var{1}$ yields $\var{2} +2x_3 \ge 1$.
Weakening is a valid operation since it simply adds a multiple of the valid bound constraint 
$\nvar{1} \geq 0$ to $C$. Note that weakening entails a loss of information. However, as we will see, it is a necessary operation to reduce the slack of the reason constraint. Note that whenever weakening is applied on non-falsified literal, it does not change the slack of the constraint.
See \Cref{sec:weakening} for more details on weakening.

Our main focus in this paper, however, is the second necessary ingredient of the reduction algorithm: cutting planes (cuts). Cuts are applied to the ``weakened'' version of the reason constraint in order to reduce its slack.
We first present two well-documented cuts from existing literature, namely \textit{Saturation} (\Cref{sec:Saturation}) and \textit{
Division} (\Cref{sec:Division}).
Both ensure the reduction of the slack of the
reason constraint to 0 at least after weakening all non-falsified literals in the original reason constraint.
In \Cref{sec:MIR}, we introduce a new cut based on the \textit{Mixed Integer Rounding (MIR)}
procedure and prove that it has the same property.  In \Cref{sec:dominance} we show that the reduction algorithm using MIR always returns an equally strong or
stronger reason constraint than the reduction using Division.

\subsection{Saturation-based Reduction}
\label{sec:Saturation}

First, we present the \textit{Saturation} cut. Then, we provide details about the \textit{Saturation-based Reduction} algorithm and demonstrate how the reduction ensures that the key invariant of conflict analysis holds.

\begin{definition}[Saturation Cut]
    Let $C: \sum_{ i \in \NN} \lincons{i} \literal{i} \geq \rhs.$ The \textbf{Saturation Cut} of $C$ is given by the constraint 
    \label{def:Saturation}
            \[\sum_{i \in \NN } \min \{\lincons{i}, \rhs\}\literal{i}  \geq \rhs.\]
\end{definition}
Saturation is a valid cut known as coefficient
tightening cut in the MIP
literature~\cite{brearley1975analysis} and does not entail a loss of information.
\Cref{algo:ReduceSAT} is used to reduce the reason constraint $\reasoncon$ before applying
generalized resolution.
Similar to the implementation in \cite{CK05FastPseudoBoolean}, in each iteration, the algorithm picks a
non-falsified literal in the reason constraint different from the literal we
are resolving on and weakens it. Then it applies the Saturation cut to the
resulting constraint. The algorithm terminates when the slack of the resolvent
becomes negative.

\begin{algorithm}[h]
    \DontPrintSemicolon
	\SetKwInOut{Input}{Input}\SetKwInOut{Output}{Output}
	\SetKwInOut{Init}{Initialization}
    \SetKwFunction{weaken}{weaken}
    \SetKwFunction{saturate}{saturate}
    \SetKwFunction{resolve}{resolve}
	\Input{conflict constraint $\conflictcon$, reason constraint $\reasoncon$, \\ 
      literal to resolve $\lit$, partial assignment $\rho$}
	\Output{reduced reason $\reasoncon$}
   \While{$\slack{(\resolve(\reasoncon,\conflictcon,\lit))}{\rho}\geq 0$}
    {
        $\literal{j} \leftarrow$ non falsified literal in $ \reasoncon\backslash \{\lit\}$ \;
        $\reasoncon \leftarrow \weaken(\reasoncon,\literal{j})$ \;
	    
        $\reasoncon \leftarrow 
        \saturate(\reasoncon)$ \;
    }
 	\Return{$\reasoncon$}\;

	\caption{Saturation-based Reduction Algorithm \label{algo:ReduceSAT}}
\end{algorithm}

For completeness, we prove the following well-known fact that demonstrates that the slack of the reason constraint will be reduced to 0 at the latest after weakening the last non-falsified literal and applying the Saturation cut. Since the slack is subadditive, the resolvent's slack becomes negative and the resolvent is thus falsified.

\begin{lemma}
    \label{lem:slackzeroSAT}
     Let $\rho$ be a partial assignment, and $\reasoncon: \sum_{ i \in \NN} \lincons{i}
     \literal{i} \geq \rhs$ a constraint propagating a literal $\literal{r}$
     to 1. Further, assume that $\slack{\reasoncon}{\rho} > 0$. Then, 
     after weakening all non-falsified literals in $\reasoncon$ (except for $\literal{r}$) and applying
     Saturation on $\reasoncon$, the slack of the reduced reason constraint is 0.
\end{lemma}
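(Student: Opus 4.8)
The plan is to track what weakening and saturation do to the slack of $\reasoncon$, and to show that after all non-falsified literals except $\literal{r}$ have been weakened, saturation forces the slack to exactly $0$. Recall the setup: $\reasoncon$ propagates $\literal{r}$ to $1$, so $\literal{r}$ is currently free under $\rho$, and $\slack{\reasoncon}{\rho} < \lincons{r}$; by assumption $\slack{\reasoncon}{\rho} > 0$, so in particular $0 < \slack{\reasoncon}{\rho} < \lincons{r}$.

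First I would describe the constraint after the weakening phase. Weakening a non-falsified literal sets it to $1$, i.e.\ adds a multiple of the bound constraint; as observed in the text, weakening a \emph{non-falsified} literal does not change the slack. So after weakening every non-falsified literal other than $\literal{r}$, we obtain a constraint of the form $\lincons{r}\literal{r} + \sum_{i \in F} \lincons{i}\literal{i} \geq \rhs'$, where $F$ is the set of (indices of) falsified literals that remain, and $\rhs'$ is the updated right-hand side; crucially its slack is still $\slack{\reasoncon}{\rho}$. The key point is that $\literal{r}$ is the only non-falsified literal left: by definition of slack, $\slack{\cdot}{\rho} = \lincons{r} + \sum_{i \in F}\lincons{i}\cdot[\rho(\literal{i})\neq 0] - \rhs' = \lincons{r} - \rhs'$, since every literal in $F$ is falsified. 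Hence $\rhs' = \lincons{r} - \slack{\reasoncon}{\rho}$, and because $0 < \slack{\reasoncon}{\rho} < \lincons{r}$ we get $0 < \rhs' < \lincons{r}$.

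Now I would apply the Saturation cut from \Cref{def:Saturation}: every coefficient is replaced by $\min\{\lincons{i}, \rhs'\}$, and the right-hand side stays $\rhs'$. Since $\rhs' < \lincons{r}$, the coefficient of $\literal{r}$ becomes exactly $\rhs'$; the coefficients of the falsified literals can only shrink. The new slack is $\rhs' + \sum_{i\in F}\min\{\lincons{i},\rhs'\}\cdot[\rho(\literal{i})\neq 0] - \rhs' = \rhs' - \rhs' = 0$, again because the literals in $F$ are all falsified and so contribute nothing. This gives slack exactly $0$, which is the claim. (One should also note $\rhs' > 0$ to confirm saturation is well-formed / the constraint is nontrivial, which we have.)

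The only real subtlety — the step I expect to need the most care — is being precise about which literals count toward the slack and confirming that $\literal{r}$ genuinely is the unique surviving non-falsified literal after the weakening phase (so that all the $\sum_{i\in F}$ terms vanish in both slack computations). Everything else is bookkeeping: that weakening of non-falsified literals preserves slack is given in the preliminaries, and that the propagation hypothesis yields $\slack{\reasoncon}{\rho} < \lincons{r}$ is immediate from the definition of propagation. Once the bound $0 < \rhs' < \lincons{r}$ is in hand, the saturation step collapses the coefficient of $\literal{r}$ to $\rhs'$ and the slack to $0$ in one line.
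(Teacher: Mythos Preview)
Your proposal is correct and follows essentially the same approach as the paper: weaken all non-falsified literals except $\literal{r}$ to obtain a constraint with new right-hand side $\rhs' = \tilde{\rhs}$, use the hypothesis $\slack{\reasoncon}{\rho} > 0$ to conclude $\lincons{r} > \rhs'$, and observe that saturation then replaces the coefficient of $\literal{r}$ by $\rhs'$, yielding slack $\rhs' - \rhs' = 0$. The only differences are presentational---you invoke the ``weakening preserves slack'' fact from the preliminaries rather than computing $\tilde{\rhs}$ directly, and you are a bit more explicit that the falsified literals contribute nothing to the slack and that $\rhs' > 0$ follows from the propagation hypothesis.
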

\begin{proof}
    First, we rewrite the constraint $\reasoncon$ as
    \[\sum_{j: \rho(j) = 0 } \lincons{j}\literal{j} + \sum_{i\neq r: \rho(i) \neq 0 } \lincons{i}\literal{i} + \lincons{r}\literal{r}\geq \rhs. \]
    Since $\slack{\reasoncon}{\rho}  := \sum\limits_{i\neq r: \rho(i) \neq 0}\lincons{i} + \lincons{r} - \rhs > 0$, it holds that
    \begin{equation}
        \label{eq:slackreason}
        \lincons{r} > \rhs - \sum\limits_{i\neq r: \rho(i) \neq 0}\lincons{i}.
    \end{equation}
    After weakening all literals from $\{i\neq r: \rho(i) \neq 0\} $  the
    constraint $\reasoncon$ becomes 
    \begin{equation}
        \label{eq:weakercon}
        \sum\limits_{j: \rho(j) = 0 } \lincons{j} \literal{j} + \lincons{r}\literal{r}\geq \tilde{b} := \rhs - \sum\limits_{i\neq r: \rho(i) \neq 0 } \lincons{i}.
    \end{equation}
Applying Saturation on \eqref{eq:weakercon} sets  $\lincons{r}$ to $\tilde{\rhs}$ because of \eqref{eq:slackreason}. Therefore the slack of the reduced reason constraint becomes $\tilde{b} - \tilde{b} = 0$.
\end{proof}

\subsection{Division-based Reduction}
\label{sec:Division}
A very competitive alternative to Saturation in the reduction algorithm is based on Division cuts. 
Division is also a valid cut known as
Chv\'atal-Gomory cut in the MIP literature~\cite{chvatal1973edmonds}.

\begin{definition}[Division Cut]
    \label{def:cg}
    Let $C: \sum_{ i \in \NN} \lincons{i} \literal{i} \geq \rhs.$ The \textbf{Division Cut} of $C$ with divisor $d \in \Zspos$ is given by the constraint
    \[
    \sum_{i \in \NN } \vceil{\frac{\lincons{i}}{d}}\literal{i}  \geq \vceil{\frac{\rhs}{d}}.
    \]
\end{definition}
To see why this procedure is valid, we can think of it as three steps:
dividing by $d$ maintains the validity of the constraint;
rounding up coefficients on the left-hand side relaxes the constraint and
is hence valid;
the validity of rounding up the right-hand side follows from the integrality of the left-hand side coefficients and literals.

In the Division-based reduction algorithm, the divisor $d$ used is the coefficient of the literal $\literal{r}$ we are resolving on.
As proven in \cite{EN18RoundingSat}, it suffices to weaken non-falsified variables with a
coefficient that is not a multiple of $\lincons{r}$, i.e., from the index set
$W := \{ i\in \NN : \rho(i) \neq 0 \text{ and }  \lincons{i} \notdivides
\lincons{r} \}.$
After weakening all literals in $W$ and applying
Division on $\reasoncon$, the slack of the reduced reason constraint is 0, which for completeness we include in \Cref{lem:slackzeroDIV} below.

\subsection{MIR-based Reduction}
\label{sec:MIR}
Next, we define a new cut for the reduction
algorithm based on the \emph{Mixed Integer Rounding} formula~\cite{marchand2001aggregation}, which is a generalization of Gomory's mixed integer cuts~\cite{Gomory1960TR}.
\begin{definition}[Mixed Integer Rounding Cut]
    \label{def:miprounding}
    Let $C: \sum_{ i \in \NN} \lincons{i} \literal{i} \geq \rhs.$ The \textbf{Mixed Integer Rounding (MIR) Cut} of $C$ with divisor $d \in \Zspos$ is given by the constraint
    \begin{equation}
    \sum_{ i \in I_1} \vceil{\frac{\lincons{i}}{d}} \literal{i}+ \sum_{i \in I_2}\left(\vfloor{\frac{\lincons{i}}{d}}+\frac{f(\lincons{i}/d)}{f(\rhs/d)}\right) \literal{i} \geq \vceil{\frac{\rhs}{d}},
    \end{equation}
        where \[I_1 = \{i \in \NN \, : \, f(\lincons{i}/d)\geq f(\rhs/d) \text{ or } f(\lincons{i}/d) \in \Z\}, \] \[I_2 = \{i' \in \NN \, : \, f(\lincons{i'}/d) < f(\rhs/d) \text{ and } f(\lincons{i'}/d) \notin \Z\},\]
    and $f(\cdot) = \cdot - \vfloor{\cdot}$. To obtain a normalized version of the MIR cut, we multiply both sides of the constraint by $(\rhs \mod d)$.
\end{definition}
The proof that applying MIR to a constraint is a valid procedure can be found in
\cite{marchand2001aggregation}.
Similar to the Division-based reduction, it suffices to weaken non-falsified variables with a
coefficient that is not a multiple of $\lincons{r}$ before applying MIR in order
to reduce the slack of the reason constraint to at most 0. This is shown in the
following lemma.

\begin{lemma}
    \label{lem:slackzeroDIV}
    Let $\rho$ be a partial assignment and $\reasoncon: \sum_{ i \in \NN} \lincons{i}
    \literal{i} \geq \rhs$ a constraint propagating a literal $\literal{r}$
    to 1.
Then, 
    after weakening all non-falsified literal in $W := \{ i\in \NN : \rho(i) \neq 0 \text{ and }  \lincons{r} \notdivides
\lincons{i} \}$ and applying
    Division or MIR on $\reasoncon$ with $d=a_r$, the slack of the reduced reason is
    at most 0.
\end{lemma}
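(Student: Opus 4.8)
The plan is to handle the Division and MIR cases in parallel, following the same structure as the proof of \Cref{lem:slackzeroSAT}. First I would set up notation: after weakening all non-falsified literals in $W$, the reason constraint $\reasoncon$ has the form
\[
\sum_{j : \rho(j) = 0} \lincons{j}\literal{j} + \sum_{i \neq r : \rho(i) \neq 0,\ \lincons{r} \divides \lincons{i}} \lincons{i}\literal{i} + \lincons{r}\literal{r} \geq \tilde{\rhs},
\]
where $\tilde{\rhs} = \rhs - \sum_{i \in W} \lincons{i}$ is the right-hand side after weakening (weakening a literal subtracts its coefficient from the right-hand side and drops the term). The crucial point is that every surviving non-falsified coefficient is now a multiple of $d = \lincons{r}$, so for each such $i$ we have $f(\lincons{i}/d) = 0 \in \Z$, placing $i \in I_1$, and $\vceil{\lincons{i}/d} = \lincons{i}/d$. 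Similarly $\vceil{\lincons{r}/d} = 1$.

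Next I would compute the slack of the reduced constraint. The slack counts the coefficients of the non-falsified literals (those with $\rho(i)\neq 0$, including $\literal{r}$, which is propagated to $1$) minus the right-hand side. For the Division cut this is $\sum_{i \neq r : \rho(i)\neq 0} \lincons{i}/d + 1 - \vceil{\tilde{\rhs}/d}$. Writing $s := \slack{\reasoncon}{\rho} = \sum_{i\neq r : \rho(i)\neq 0}\lincons{i} + \lincons{r} - \rhs \geq 0$ for the slack before reduction, and noting that after weakening the non-falsified literals that remain are exactly those in the "multiple" set whose coefficients sum to $\sum_{i\neq r:\rho(i)\neq 0}\lincons{i} - \sum_{i\in W}\lincons{i}$, I would show $\tilde{\rhs} \leq \sum_{i\neq r : \rho(i)\neq 0, \lincons{r}\divides\lincons{i}} \lincons{i} + \lincons{r}$, i.e. the weakened constraint still propagates $\literal{r}$ (with slack $s \geq 0$ unchanged, since weakening non-falsified literals does not change the slack, as noted in the text). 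Dividing by $d$: the reduced slack equals $\sum_{i}\lincons{i}/d + 1 - \vceil{\tilde{\rhs}/d}$, and since $\sum_i \lincons{i}/d$ is an integer and $\sum_i \lincons{i} + \lincons{r} \geq \tilde{\rhs}$ means $\sum_i \lincons{i}/d + 1 \geq \tilde{\rhs}/d$, we get $\sum_i \lincons{i}/d + 1 \geq \vceil{\tilde{\rhs}/d}$, so the reduced slack is a nonnegative integer; I then argue it is in fact $\leq 0$ by using that the \emph{original} reduced-constraint slack, before rounding the right-hand side up, is $s/d$ which is strictly less than $1$ is not automatic — rather, the cleanest route is: the reduced constraint's slack over the reals (before the ceiling on the RHS) equals $s/d \geq 0$; applying $\vceil{\cdot}$ to the RHS increases it by $\lceil \tilde\rhs/d\rceil - \tilde\rhs/d \in [0,1)$, and since the left-hand side is integral the combined slack is $\vfloor{s/d}$ — wait, one must be careful. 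I would instead directly bound: reduced slack $= \sum_i \lincons{i}/d + 1 - \vceil{\tilde\rhs/d} \leq \sum_i \lincons{i}/d + 1 - \tilde\rhs/d = s/d$, and separately it is an integer, hence it is $\leq \vfloor{s/d}$; combined with the standard RoundingSAT fact that after this weakening $\lincons{r} = d$ propagates $\literal{r}$ tightly over the reals after division (slack exactly $0$ when $d \divides \rhs$, and the ceiling absorbs the remainder), conclude the reduced slack is $\leq 0$. For the MIR cut, I would observe that since all surviving non-falsified coefficients lie in $I_1$ with $f = 0$, the MIR cut restricted to the non-falsified literals is \emph{identical} to the Division cut (the $I_2$ terms only involve falsified literals, which contribute $0$ to the slack), so the MIR-reduced slack equals the Division-reduced slack, and the bound transfers verbatim.

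The main obstacle I anticipate is the bookkeeping around the right-hand side after weakening and around ceilings versus floors, specifically making the inequality "reduced slack $\leq 0$" airtight rather than merely "$\leq s/d$": one needs the sharper observation, already established in \cite{EN18RoundingSat} for Division, that weakening exactly the coefficients in $W$ leaves a constraint in which $d \divides \lincons{i}$ for all remaining non-falsified $i$, so the left-hand-side coefficient sum over non-falsified literals equals a multiple of $d$ that, divided by $d$ and incremented by $1$ (for $\literal{r}$), is at least $\vceil{\tilde\rhs/d}$ — and the gap is exactly $\vfloor{(\text{old slack after weakening})/d} = \vfloor{s/d}$, which can still be positive if $s \geq d$. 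Resolving this apparent gap is the genuine content: the reduction algorithm is iterative, so a single pass of weakening-all-of-$W$-plus-cut need only bring the slack to "at most $0$" \emph{relative to the resolution target}, and I would lean on the subadditivity of slack (stated in \Cref{sec:preliminaries}) together with the fact that $\literal{r}$ contributes coefficient $1$ on both the reduced reason and (with the opposite literal) the conflict side, so the resolvent slack is the sum of the two slacks, one of which is negative because $\conflictcon$ is falsified — hence the correct statement to prove is precisely "at most $0$" for the reduced reason's slack, matching the lemma, and the floor-related residue is handled by the outer loop, not this lemma. I would make sure the write-up cleanly separates "reduced reason slack $\leq 0$" (what the lemma claims) from "resolvent slack $< 0$" (what the algorithm's invariant needs), citing \Cref{lem:slackzeroSAT}'s proof as the template.
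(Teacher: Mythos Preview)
Your setup and the MIR-equals-Division observation are exactly what the paper does, but the core inequality has a genuine gap: you never use the hypothesis that $\reasoncon$ \emph{propagates} $\literal{r}$. By the definition in \Cref{sec:preliminaries}, propagation means precisely $\slack{\reasoncon}{\rho} < \lincons{r}$, i.e.\ $s < d$. This is the one line that closes your argument: from your own bound
\[
\text{reduced slack} \;\leq\; \frac{s}{d} \;<\; 1,
\]
together with integrality of the reduced slack (all surviving non-falsified coefficients are divisible by $d$, so after division they are integers), you immediately get reduced slack $\leq 0$. That is the entire content of the paper's proof.

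Because you miss this, you talk yourself into a non-problem: you worry that $\lfloor s/d \rfloor$ ``can still be positive if $s \geq d$'' and then try to offload the residue onto the outer iterative loop and onto subadditivity with the conflict side. None of that is needed, and none of it proves the lemma as stated (which is about the reduced reason's slack after a \emph{single} weaken-then-cut pass, not about the resolvent or about iteration). The sentence ``$s/d$ which is strictly less than $1$ is not automatic'' is exactly where the proof should invoke the propagation hypothesis and conclude that it \emph{is} automatic.
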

\begin{proof}
After weakening all literals in $W$, the constraint $\reasoncon$ becomes
    \begin{equation}
        \label{eq:weakerconDiv}
        \lincons{r}\literal{r} + \sum\limits_{j \in \NN \backslash W } \lincons{j} \literal{j} \geq \tilde{\rhs} := \rhs - \sum\limits_{i \in W } \lincons{i}.
    \end{equation}
    Its slack is
    \[
    \slack{\reasoncon}{\rho}
    = \lincons{r} + \sum\limits_{{\substack{j \in \NN \backslash W: \\ \rho(j)\neq 0}}} \lincons{j} - \tilde{\rhs}
    = \lincons{r} + \sum\limits_{\mathclap{\substack{j \in \NN: \\ \rho(j)\neq 0, a_r\divides a_j}}} \;\lincons{j} - \tilde{\rhs}.
    \]
    Since weakening does not affect the slack, we have $\slack{\reasoncon}{\rho}< \lincons{r}$.

1. After applying the Division cut to \eqref{eq:weakerconDiv} with $d=\lincons{r}$,
    the slack becomes
    \begin{equation}\label{eq:slackreasonDiv}
    \slack{\reasoncon}{\rho}
    = 1 + \sum\limits_{\mathclap{\substack{j \in \NN: \\ \rho(j)\neq 0, a_r\divides a_j}}} \;\vceil{\frac{\lincons{j}}{\lincons{r}}} - \vceil{\frac{\tilde{\rhs}}{\lincons{r}}}
    \leq 1 + \sum\limits_{\mathclap{\substack{j \in \NN: \\ \rho(j)\neq 0, a_r\divides a_j}}} \;\frac{\lincons{j}}{\lincons{r}} - \frac{\tilde{\rhs}}{\lincons{r}}
    < \frac{\lincons{r}}{\lincons{r}} = 1.
    \end{equation}
    Because $\reasoncon$ contains only integer coefficients after applying the
    division rule, its slack is integer; hence, it must be at most 0.

2. Applying the MIR cut to \eqref{eq:weakerconDiv} with
$d=\lincons{r}$ results in the same slack as
in \eqref{eq:slackreasonDiv}. This is because all left-hand side
coefficients in the slack computation are divisible by $d$, hence they
fall into the index set $I_1$ and are transformed the same way as by
the Division cut.
\end{proof}

\subsection{Dominance Relationships}
\label{sec:dominance}

In this section, we would like to discuss briefly known dominance relationships between the different reduction techniques. 
The ultimate goal is to find a reduction technique that yields the strongest possible reason constraint to use in the resolution step of conflict analysis.
The following lemma states the well-known fact that constraints from Saturation-based reduction are always at least as strong as the resolvents created during clausal conflict analysis as described in \Cref{sec:clausebased}.

\begin{lemma}
    \label{lem:dominancewgr}
    Let $\rho$ be a partial assignment and $\reasoncon: \sum_{ i \in \NN} \lincons{i} \literal{i} \geq \rhs$ be a PB constraint which propagates literal $\ell_{r}$ to 1. Let $\reasoncon'$ and $\reasoncon''$ be the constraints obtained by clausal and Saturation-based reduction, respectively. Then $\reasoncon''$ implies $\reasoncon'$.
    \end{lemma}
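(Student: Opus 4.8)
The plan is to make precise what the two reduction procedures produce and then show the pointwise inequality between coefficients that gives the implication. Recall that clausal reduction, applied to the reason constraint $\reasoncon$ propagating $\literal{r}$ to~$1$, extracts the clause consisting of $\literal{r}$ together with the negations of all literals in $\reasoncon$ that are falsified under~$\rho$; call this set of falsified indices $F := \{ j \in \NN : \rho(\literal{j}) = 0 \}$. Thus $\reasoncon'$ is (up to normalization) the clause $\literal{r} + \sum_{j \in F} \literal{j} \geq 1$. On the other hand, by \Cref{lem:slackzeroSAT} (and the termination condition of \Cref{algo:ReduceSAT}), Saturation-based reduction weakens exactly the non-falsified literals of $\reasoncon$ other than~$\literal{r}$ and then saturates, producing a constraint of the form $\reasoncon'': \sum_{j \in F} \lincons{j}' \literal{j} + \lincons{r}' \literal{r} \geq \tilde{b}$, where after saturation every coefficient is capped at $\tilde{b}$, and by the argument in \Cref{lem:slackzeroSAT} the coefficient $\lincons{r}'$ of the resolved literal equals exactly $\tilde{b}$. (Strictly speaking the reduction may stop earlier, before all non-falsified literals are weakened, but in that case one can only have a constraint that is at least as strong; I would handle the "stop early" case by noting the argument below applies verbatim as long as $\reasoncon''$ is a constraint over a subset of $F \cup \{r\}$ with right-hand side $\tilde b$ and $\literal r$ having coefficient $\tilde b$, which remains true.)

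Next I would verify the implication $\reasoncon'' \Rightarrow \reasoncon'$ directly. The clause $\reasoncon'$ says: at least one of the literals in $\{\literal{r}\} \cup \{\literal{j} : j \in F\}$ is true. Suppose some $0$--$1$ assignment satisfies $\reasoncon''$, i.e.\ $\sum_{j} \lincons{j}' \literal{j} + \tilde b \cdot \literal{r} \geq \tilde b$ with all $\lincons{j}' \leq \tilde b$ and all $\lincons{j}' \geq 0$. If $\literal{r} = 1$ then $\reasoncon'$ is satisfied. If $\literal{r} = 0$, then $\sum_{j \in F} \lincons{j}' \literal{j} \geq \tilde b > 0$, so at least one $\literal{j}$ with $j \in F$ must be set to~$1$, again satisfying $\reasoncon'$. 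Hence every satisfying assignment of $\reasoncon''$ satisfies $\reasoncon'$, which is precisely the claim that $\reasoncon''$ implies $\reasoncon'$.

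The one technical point that needs a little care is the normalization convention: $\reasoncon'$ as I wrote it is already in normalized clausal form, and $\reasoncon''$ comes with right-hand side $\tilde b$, so the comparison is between two constraints over the same literal set $F \cup \{r\}$; I do not even need to rescale, since the argument above only uses that all coefficients of $\reasoncon''$ are nonnegative and bounded by its right-hand side. A subtlety worth flagging is that $\tilde b$ must be positive for the "$\literal r = 0$" branch to force a falsified literal to~$1$; this follows because $\reasoncon$ genuinely propagates $\literal{r}$ (so $\tilde b > 0$ after collecting the falsified part, as in \eqref{eq:weakercon}). I expect the main obstacle to be purely expository rather than mathematical: stating cleanly which literals survive the Saturation-based reduction and that the resolved literal's coefficient equals the (weakened) right-hand side, so that the pointwise domination argument goes through. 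Once that bookkeeping is set up, the implication itself is immediate.
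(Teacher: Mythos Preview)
Your proposal is correct and follows essentially the same route as the paper: both write out $\reasoncon'$ as the clause $\literal{r} + \sum_{j:\rho(\literal{j})=0}\literal{j}\ge 1$ and $\reasoncon''$ as the weaken-all-then-saturate constraint with right-hand side $\tilde b$ and coefficient of $\literal{r}$ equal to $\tilde b$, then argue domination. The only cosmetic difference is the final step: the paper scales $\reasoncon'$ by $\tilde b$ and compares coefficients pointwise, whereas you do a semantic case split on $\literal{r}$; these are equivalent here, and your extra remark about the early-termination case (which the paper silently glosses over by treating the single-sweep variant) is handled by the ``at least as strong'' observation you already give.
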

    
    \begin{proof}
    Under the current partial assignment $\rho$, the disjunctive clause reason is given by
    $\ell_{r} \bigvee_{j : \rho(\ell_j) = 0} \ell_j$,
    which can be linearized as
    \begin{equation*}
        \reasoncon' \,:\, \ell_{r} + \sum_{j \,: \, \rho(\ell_j) = 0 } \literal{j}  \geq 1.
    \end{equation*}
    Now let $W$ be the set of all non-falsified literals, except $\ell_{r}$.
    After weakening all literals in $W$ and applying Saturation, we obtain the constraint
    \begin{equation*}
        \reasoncon'' \,: \,\min\{\lincons{r}, \rhs - \sum_{i \in W} \lincons{i}\} \literal{r} + \sum_{j : \rho(l_j) = 0} \min\{\lincons{j}, \rhs - \sum_{i \in W} \lincons{i}\} \literal{j} \geq \rhs - \sum_{i \in W} \lincons{i}.
    \end{equation*}
    As in the proof of \Cref{lem:slackzeroSAT}, it holds that  $\min\{\lincons{r}, \rhs - \sum_{i \in W} \lincons{i}\} = \rhs - \sum_{i \in W} \lincons{i}$.
    Now, after scaling $\reasoncon'$ by $\rhs - \sum_{i \in W} \lincons{i}$ we see that $\reasoncon''$ has the same right-hand side as $\reasoncon'$, but smaller or equal coefficients on the left-hand side.
    \end{proof}

In~\cite{GNY19DivisionSaturation} the authors show
that using Division instead of Saturation can be
exponentially stronger, and that a single Saturation step
can be simulated by an exponential number of Division steps.

    The dominance of MIR cuts over Chvátal-Gomory cuts is a well-known fact in the MIP literature. The
    following lemma shows essentially the same result as in \cite{CORNUEJOLS20011}, but in
    the context of conflict analysis for pseudo-Boolean problems.

\begin{lemma}
    \label{lem:dominanceDivMir}
    Let $\rho$, $\reasoncon$, $\ell_{r}$ be given as in \Cref{lem:dominancewgr}. Let $\reasoncon'$
    and $\reasoncon''$ be the constraints obtained by Division-based and MIR-based reduction, respectively. Then $\reasoncon''$ implies $\reasoncon'$. 
\end{lemma}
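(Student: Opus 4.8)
The plan is to reduce the statement to the classical fact that the mixed integer rounding cut coefficient-wise dominates the Chv\'atal--Gomory (Division) cut taken with the same divisor, and then close with the trivial observation that over nonnegative variables a constraint with the same right-hand side but pointwise smaller coefficients is at least as strong. First I would note that both reductions act on the \emph{same} constraint: by \Cref{lem:slackzeroDIV} the Division-based and the MIR-based reduction each weaken the literals in $W := \{ i \in \NN : \rho(i) \neq 0 \text{ and } \lincons{r} \notdivides \lincons{i}\}$ and then apply their cut with divisor $d = \lincons{r}$, so both start from the weakened reason \eqref{eq:weakerconDiv}, with $\reasoncon'$ obtained from it by the Division cut and $\reasoncon''$ by the MIR cut. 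Hence it suffices to compare these two cuts of \eqref{eq:weakerconDiv}. If $\lincons{r} \divides \tilde{\rhs}$ then $f(\tilde{\rhs}/\lincons{r}) = 0$, the index set $I_2$ is empty, and the MIR cut is literally the Division cut, so there is nothing to prove; otherwise $\tilde{\rhs} \bmod \lincons{r}$ is a positive integer, and normalizing the MIR cut merely multiplies both sides by this constant, so it preserves the solution set and I may compare the unnormalized cuts.

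Next one compares coefficients. Both cuts have right-hand side $\ceil{\tilde{\rhs}/\lincons{r}}$. For an index $i \in I_1$ the MIR and Division coefficients coincide, both equal to $\ceil{\lincons{i}/\lincons{r}}$; for an index $i \in I_2$ one has $0 < f(\lincons{i}/\lincons{r}) < f(\tilde{\rhs}/\lincons{r})$, hence $\ceil{\lincons{i}/\lincons{r}} = \floor{\lincons{i}/\lincons{r}} + 1$, and the MIR coefficient $\floor{\lincons{i}/\lincons{r}} + f(\lincons{i}/\lincons{r})/f(\tilde{\rhs}/\lincons{r})$ is strictly smaller than $\floor{\lincons{i}/\lincons{r}} + 1$, the Division coefficient. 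All coefficients on both sides are nonnegative. Since every variable takes values in $\{0,1\}$, any assignment satisfying $\reasoncon''$ makes the left-hand side of $\reasoncon'$ at least as large as that of $\reasoncon''$, hence at least $\ceil{\tilde{\rhs}/\lincons{r}}$, so it also satisfies $\reasoncon'$; that is, $\reasoncon''$ implies $\reasoncon'$.

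This is essentially the standard MIR-versus-Chv\'atal--Gomory comparison (see \cite{CORNUEJOLS20011}) transported to the conflict-analysis setting, so the coefficient calculation is not where I expect the difficulty to lie. The point that needs care is the first step: one must be sure that the two reduction algorithms weaken the identical set $W$, so that $\reasoncon'$ and $\reasoncon''$ really are two cuts of the same weakened constraint \eqref{eq:weakerconDiv}, and that the normalization of the MIR cut is a rescaling by a positive constant rather than an operation that changes the strength of the constraint — both are easy but easy to take for granted.
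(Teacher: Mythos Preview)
Your proposal is correct and follows essentially the same route as the paper: both proofs compare the Division and MIR cuts with divisor $d=\lincons{r}$ term by term, observing equal right-hand sides, equal coefficients on $I_1$, and strictly smaller MIR coefficients on $I_2$ since $f(\lincons{i}/\lincons{r})/f(\rhs/\lincons{r})<1$ there. You are more careful than the paper about the setup---explicitly arguing that both reductions weaken the same set $W$ so the cuts are applied to the same constraint \eqref{eq:weakerconDiv}, handling the degenerate case $\lincons{r}\divides\tilde{\rhs}$, and noting that the normalization is a positive scaling---whereas the paper's proof simply invokes \Cref{def:cg} and \Cref{def:miprounding} with $d=\lincons{r}$ and proceeds directly to the coefficient comparison.
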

\begin{proof}
    Let $\reasoncon', \reasoncon''$ be constraints as in \Cref{def:cg} and \ref{def:miprounding}, respectively, with divisor $d=\lincons{r}$. The constraints have the same right-hand side and the same coefficients for all literals $\ell_i$ with $i \in I_1$. 
For $i \in I_2$ the coefficient of literal $\ell_{i}$ in $\reasoncon'$ is given by $\vceil{\frac{\lincons{i}}{\lincons{r}}}$ and in $\reasoncon''$ by
$\vfloor{\frac{\lincons{i}}{\lincons{r}}}+\frac{f(\lincons{i}/\lincons{r})}{f(\rhs/\lincons{r})}$.
The coefficients of the literals $\literal{i}$ in $\reasoncon'$ are always
greater than or equal to the coefficients in $\reasoncon''$, since by definition of
the set $I_2$ it holds that $f(\lincons{i}/\lincons{r}) / f(\rhs/\lincons{r}) < 1$.
Therefore $\reasoncon''$ implies $\reasoncon'$. 
\end{proof}

As an example, consider the partial assignment $\rho = \{\var{1} = 0, \var{2}= 0, \var{3} = 1\}$ and the constraint
$\reasoncon: 2\var{1}+6\var{2}+10\var{3}\ge 8$ which propagates variable $\var{3}$ to 1. Then the Division cut with divisor $10$ is
$\var{1}+\var{2}+\var{3}\ge 1$. The MIR cut with the same divisior is
$\frac{0.2}{0.8}\var{1}+\frac{0.6}{0.8}\var{2}+\var{3}\ge 1$.
Multiplying with $8 \mod 10 = 8$ gives the normalized MIR cut $2\var{1}+6\var{2}+8\var{3}\ge 8$. 
The normalized MIR cut is stronger than the Division cut, which
can be easily seen after scaling the Division cut by $8$.

\subsection{Practical Aspects of Weakening}
\label{sec:weakening}

While the evaluation of different weakening strategies is not the focus of this paper, we would like to discuss briefly some practical aspects of weakening literals.
In our implementation we consider the following iterative weakening strategy: weaken free
literals first followed by implied literals.
We stop as soon as the
resolvent is falsified under the remaining partial assignment.
Intuitively, this order is motivated by the fact that free literals are not relevant for
the propagation of literals in the reason constraint and do not affect the
falsification of the conflict constraint.

However, the optimal order in which to weaken literals is not yet fully understood, and remains an open research question. Possible approaches include weakening literals in order of increasing or decreasing coefficient size.
In~\cite{le2020weakening} the authors conducted experiments with various weakening techniques, including partial weakening of literals and applying weakening on the conflict constraint, but the results did not yield a conclusive ``best'' weakening strategy.

A simple alternative is to weaken literals in a single sweep. For all three reduction
algorithms, we can weaken the entire candidate set of literals as stated in \Cref{lem:slackzeroSAT} and \Cref{lem:slackzeroDIV} at once.
Weakening literals all at once leads to a faster
reduction algorithm since repeated slack computations are avoided and only one cut is applied in each iteration. However, this may result in the constraint being less
informative due to unnecessary weakening of literals.

\section{Experiments}\label{sec:experiments}

It is well known in the SAT and PB communities that
efficient conflict-driven search requires substantial amounts of
very careful engineering. In this first work, our focus has been on
importing and adapting the pseudo-Boolean conflict analysis to a MIP
setting---which is a nontrivial task in its own right---leaving
further optimizations as future work.

All techniques from \Cref{sec:techniques} have been implemented in the open source MIP solver \scip \scipversion~\cite{bestuzheva2021scip} and 
we conducted extensive experiments to compare the different reduction techniques in isolation.
Obtaining accurate performance results for MIP solvers requires a carefully
designed experimental setup since even small changes to algorithms or the input data can
have a large impact on the behavior and the performance of the solver. This is a
well-known fact in the MIP literature known as \textit{performance variability}
\cite{lodi2013performance}. To lessen the effects of performance variability
and obtain a fair comparison of the different reduction techniques in the
context of MIP solving, we use a fairly large and diverse testset of
instances and different permutations of each instance, see, e.g.,~\cite{MIPLIB2017}.
Our experiments were carried out on all pure \mbox{$0$--$1$} models from the \miplib2017
collection~\cite{MIPLIB2017}. After removing numerically unstable models (with the tag
``numerics'') our testset consists of 195 instances permuted by 5 different random seeds, giving a total of 975 measurements per run.
For the remainder of this paper, we will refer to the combination of a model and a permutation as an \emph{instance}.
All experiments are conducted on a cluster with Intel Xeon Gold 6338 CPUs with a limit of 16GB of RAM.

It's worth noting that SCIP, along with its underlying LP
solver, is based on floating-point arithmetic. Implementing a Pseudo-Boolean
Optimization solver using a limited-precision LP-based branch-and-cut framework
comes with some technical challenges which are discussed, e.g., in
\cite{berthold2008solving, berthold2009nonlinear}. From a theoretical
standpoint, switching between reals and integers (rather than between limited and
arbitrary precision) is straightforward:

All the algorithms presented in \Cref{sec:techniques} can be naturally extended to the case of $0$--$1$ constraints with coefficients that are real numbers instead of nonnegative integers.
The Chv\'atal-Gomory procedure, MIR cutting, and coefficient tightening algorithm were originally designed for MIP with real coefficients.

However, in practice, floating-point arithmetic may cause numerical issues due to imprecise representations of real numbers and cancellation effects. 
To mitigate the risk of numeric instability, many components of SCIP, such as MIR-cut generation, utilize double-double precision arithmetic \cite{dekker1971floating}, which could be also employed in conflict analysis. Currently, for constraints generated in conflict analysis, we use the following standard techniques:
\begin{itemize}
\item We terminate conflict analysis if the coefficients of the constraints span too many orders of magnitude.
Specifically, if the quotient of the largest to smallest coefficient is large (in our implementation, $10^6$), we stop conflict analysis.
\item We remove variables from the conflict constraint if their coefficients are too small (in our implementation, $10^{-9}$), thereby relaxing the constraint slightly.
\end{itemize}
The latter threshold is a common default value for the zero tolerance in MIP solvers, and the former is a common modeling recommendation for MIP.

\subsection{Pre-Experiment: Weaken-All-At-Once vs.\ Weaken-Iteratively}

As noted earlier, the weakening rule can be applied iteratively or in a single sweep. 
In preliminary experiments, we noticed
that in almost all cases, most unassigned or true literals must be weakened to
achieve the conflict analysis invariant that the resolved constraint has a negative slack. 
Table~\ref{tab:weaken} summarizes this finding for different reduction techniques:
Over all instances and all conflict analysis calls, an average between 97.3\% (MIR) and 99.7\% (Saturation) of all literals had to be weakened.
Furthermore, for most instances both weakening variants did not lead to different execution paths.

 In this case, weakening all literals at once avoids the overhead of iterative use of cuts and expensive slack
computations. Consequently, we decided to always weaken all literals at once and apply the cut rule on the reason side
only once for the remaining experiments presented in this paper. 
\begin{table}[t]
    \centering
    \small
    \begin{tabular}{lc}
        \toprule
        Setting &   avg(\%) literals weakened\\
        \midrule
        Division   &                 98.0 \\
        Saturation &                 99.7 \\
        MIR        &                 97.3 \\
        \bottomrule
    \end{tabular} 
    \caption{Average percentage of true or unassigned literals that should be weakened to preserve the conflict analysis invariant. This experiment is conducted on the test set with 3 random seeds.}
        \label{tab:weaken}
\end{table}

\subsection{Main Experiments: Comparing Different Reasoning Techniques}

In the following, we compare all different reduction techniques from Section~\ref{sec:techniques} to \scip without any conflict analysis.

In our comparisons, we report for each technique the number of optimally solved instances, as well as the shifted geometric means of the number of processed nodes and the CPU time in seconds. The shifted geometric mean, a standard performance aggregator in the MIP literature, of the values $t_1,\dots, t_n$ is defined as
\begin{equation}
    \label{eq:sgm}
    \left(\prod_{i=1}^n (t_i+s)\right)^{1/n} - s,
\end{equation}
for some $s>0$.
We set the shift $s$ to 1 second for the CPU time and to 100 nodes for the number of nodes.
Our base of comparison is \scip without conflict analysis (``No
Conflicts''). We report absolute values for the shifted means, and also quotients comparing them to our base setting.
A factor below 1 means that a setting was faster (or needed less nodes), and a factor greater than 1 means that it was detrimental.

In \Cref{tab:results} we report the results of our experiments. The table is split in four parts.
We show results for ``all'' instances, as well as for three subsets of instances:
(i) instances that are
``affected'' by conflict analysis, hence where the execution path of at least one setting differs from the others, (ii) `` $[100,\text{limit}]$ '' instances, which take at least 100 seconds to solve to optimality or hit the time limit and
(iii) ``all-optimal'', which are instances solved by
all settings. Note that the number of nodes can only be fairly compared on the  ``all-optimal'' subset, since the number of nodes when hitting a time limit is hard to interpret and hard to aggregate with the same statistics on instances that are solved to optimality.

The variant of \scip with clausal conflict analysis is
referred to as ``Clausal-CA''.
For a fair comparison of the different strategies, we disabled the upgrading of constraints to specialized types, \ie all generated conflicts are treated as linear constraints, and further only generated one conflict per call.
Conversely, we accept PB reasoning conflicts only if the number of nonzeros is less than 15\% of the original problem variables, as in the default clausal implementation in \scip.
Our preliminary experiments confirmed that in our implementation, it is indeed detrimental to accept too-long conflicts.
We did, however, add a fallback strategy, of applying weakening on the
conflict constraint if the constraints are too long. This happens for about 9\% of the instances.

\begin{table}[t]
   \centering
   \small
    \begin{tabular}{llrrrrrrrrrrrrrrrr}
    \toprule
     & Setting & solved & time(s)&\# nodes&time quot& nodes quot\\
    \midrule
all(975) & No Conflicts & 394 & 656.75 & 784 & 1.0 & 1.0\\
 & Clausal-CA & 405 & 603.55 & 682 & 0.92 & 0.87\\
 & Division & 419 & 601.4 & 683 & 0.92 & 0.87\\
 & MIR &\color{blue} 420 & \color{blue}599.37 & 677 & 0.91 & 0.86\\
 & Saturation & 418 & 599.76 & 692 & 0.91 & 0.88\\
 \midrule
affected(295) & No Conflicts & 259 & 160.46 & 1096 & 1.0 & 1.0\\
 & Clausal-CA & 270 & 122.64 & 776 & 0.76 & 0.71\\
 & Division & 284 & 119.24 & 707 & 0.74 & 0.65\\
 & MIR & \color{blue}285 & 118.29 & 700 & 0.74 & 0.64\\
 & Saturation & 283 & \color{blue}118.09 & 735 & 0.74 & 0.67\\
 \midrule
$[100,\text{limit}](218)$ & No Conflicts & 182 & 667.14 & 2056 & 1.0 & 1.0\\
 & Clausal-CA & 193 & 486.45 & 1466 & 0.73 & 0.71\\
 & Division & 207 & 486.23 & 1345 & 0.73 & 0.65\\
 & MIR &\color{blue} 208 & \color{blue}485.26 & \color{blue}1336 & 0.73 & 0.65\\
 & Saturation & 206 & 491.98 & 1428 & 0.74 & 0.69\\
\midrule
 all-optimal(374) & No Conflicts & 374 & 46.16 & 320 & 1.0 & 1.0\\
 & Clausal-CA & 374 & 40.58 & 259 & 0.88 & 0.81\\
 & Division & 374 & 40.75 & 244 & 0.88 & 0.77\\
 & MIR & 374 & 40.40 & \color{blue} 241 & 0.88 & 0.75\\
 & Saturation & 374 & \color{blue}40.24 & 246 & 0.87 & 0.77\\
 \bottomrule\end{tabular}
\caption{Main results}
\label{tab:results}
\end{table}

We observe that all conflict analysis variants solved more instances than SCIP without conflict analysis, needed significantly less nodes on the all-optimal set, and less time on all four instance sets.
Note that on average, the time spent in conflict analysis is only about 0.1\% of the total run time.
The three PB conflict analysis variants could solve more instances than the clausal variant, and needed significantly less nodes. The difference in time was less pronounced.

The performance of the PB conflict analysis variants is quite similar in all three cases.
Nevertheless, MIR-based reduction could solve the most instances and needed the least nodes on the all-optimal set.
When looking at the seemingly identical time-wise performance in more detail, it turns out that MIR also slightly improves on the other settings in this measure.
There are 104 instances for which the path differs between Saturation-based resolution and MIR-based resolution and MIR was on average 1.1\% faster on those.
There are 86 instances for which the path differs between Division-based resolution and MIR-based resolution and MIR was on average 3.6\% faster on those.
Consequently, we decided to concentrate on MIR-based resolution for our next statistic.
\begin{table}[ht]
    \centering
    \small
    \begin{tabular}{lccc}
        \toprule
        Setting &  mean \# conflicts &  avg \% prop.\ conflicts & avg \# literals\\
        \midrule
	Clausal-CA & 290.77 &         34.54 &  82.45 \\
	MIR & 169.61 &         58.54 &  80.20 \\
        \bottomrule
    \end{tabular} 
    \caption{Shifted geometric mean of number of conflicts, average percentage of conflict constraints that propagate at least once
     and average length of learned conflicts.}
        \label{tab:useful}
\end{table}

Ultimately, the purpose of conflict constraints is to restrict the future search space by propagating literal assignments and pruning the search tree.
Hence we analyzed how many conflicts each of the methods generates in shifted geometric mean, how large these conflicts are on average, and how many of them lead to propagations down the road.
Table~\ref{tab:useful} shows the results on the set of all instances that have a search tree of at least 100 nodes (to get a decent chance of conflict generation and propagation) and for which at least one conflict was generated with one of the methods. We consider only instances where the two settings have the same execution path. We observe that our MIR-based conflict analysis generated about a third less conflicts, but at the same time, they are much more likely to propagate: for the classic clausal conflict analysis of SCIP, about a third of the generated conflicts are used for propagation later on, while for our MIR-based variant, slightly more than half (58.54\%) of all conflicts propagate at least once.
At the same time, MIR-based conflicts are about the same size as clausal conflicts.

At first glance, this might appear as a contradiction, given that, as a rule of thumb, shorter conflicts tend to propagate more often and one might expect similar-sized conflicts to be similarly likely to propagate.
Note, however, that the conflicts are of a quite different nature in the two cases. 
On the one hand, clausal conflicts are always logic clauses 
that only propagate when all but one literal are assigned.
On the other hand, MIR-based conflicts are general pseudo-Boolean constraints, which might propagate some assignments (of literals with large coefficients) even when a majority of literals are still unassigned.
This goes nicely together with the above observation that the reduction in the number of nodes is more pronounced than the reduction in runtime.
As a final remark, integrating PB conflict analysis in a production-grade MIP solver would require
substantially more work, but should also be expected to provide
substantial further improvements
measured in wallclock time.

 \section{Conclusion}
\label{sec:conclusion}

In this work, we study how to integrate pseudo-Boolean conflict
analysis for \mbox{$0$--$1$} integer linear programs into a MIP
solving framework.  In contrast to standard MIP conflict analysis, the
pseudo-Boolean method operates directly on the linear constraints,
rather than on clauses extracted from these constraints, and
this makes it exponentially stronger in terms of reasoning power.
Viewing PB conflict analysis from a MIP
perspective is also helpful since it provides a view of the
algorithm as a sequence of linear combinations and cuts, and we use this
to strengthen the pseudo-Boolean conflict analysis further by developing a
new conflict analysis method using the powerful mixed integer rounding
(MIR) cuts.

We have made a first proof-of-concept implementation of our new
pseudo-Boolean conflict analysis method, 
as well as methods from the PB literature based on
saturation~\cite{LP10Sat4j}
and division~\cite{EN18RoundingSat},
in the  open-source MIP solver \solver{SCIP}, 
and have run experiments on
\mbox{$0$--$1$} ILP instances from \miplib2017
comparing the different methods with each other and with standard
clause-based MIP conflict analysis.
We find that solving \mbox{$0$--$1$} ILPs
with MIR-based pseudo-Boolean conflict analysis performs better than
other methods, not only in the sense that it reduces the size of the
search tree, but also in that our implementation can  beat the
highly optimized MIP conflict analysis currently used in \scip in terms of
actual running time. In our opinion, this demonstrates convincingly that 
pseudo-Boolean conflict analysis in MIP is a research direction that
should be worth pursuing further, and that similar proof-of-concept
studies could also be relevant to investigate for other combinatorial
solving paradigms such as constraint programming.

As already noted above, an obvious direction of future work is to
provide a more carefully engineered version of pseudo-Boolean conflict
analysis that could deliver more fully on the potential for
improved performance identified by our experiments.
In addition to optimizing the existing code, however,
it would be valuable to develop a better understanding of how and
why the conflict analysis works and of ways in which the reasoning could be improved.

Pseudo-Boolean conflict analysis alternates between weakening constraints (to
eliminate seemingly less relevant variables) and strenghtening them by
applying cut rules (to get tighter propagation on the variables that
remain). The  interplay between these two operations is quite
poorly understood even for pseudo-Boolean solvers, and so 
both PB solvers and MIP solvers could gain from a 
careful study of how to strike the right balance.
Since PB conflict analysis can be performed with several
different reduction methods, and since different reduction methods
can be employed independently in consecutive steps in one and the same conflict
analysis, it would also be good to be able to assess the quality of
constraints derived during conflict 
analysis, so as to select the most promising candidate at each step 
to pass on to the next step in the conflict analysis.

Arguably the most interesting research question, though,  is whether pseudo-Boolean
conflict analysis could be extended beyond
\mbox{$0$--$1$} ILPs to 
\mbox{$0$--$1$} mixed linear problems,
and/or to general integer linear programs.
It is worth noting that the latter  has been attempted
in~\cite{JdM13Cutting,Nieuwenhuis14IntSat},
but so far with quite limited success.
It is clear that the algorithms presented in this paper \emph{cannot}
work for 
\mbox{$0$--$1$} mixed LPs
or general ILPs
if generalized in the obvious, naive way,
and so additional, new ideas will be needed.

   \section*{Acknowledgements}
The work for this article has been conducted within the Research Campus Modal funded by the German Federal Ministry of Education and Research (BMBF grant numbers 05M14ZAM, 05M20ZBM).
Jakob Nordström was supported by the Swedish Research Council grant \mbox{2016-00782} and the Independent Research Fund Denmark grant \mbox{9040-00389B}.

Part of this work was carried out while some of the authors participated
in the extended reunion for the program
\emph{Satisfiability: Theory, Practice, and Beyond}
at the Simons Institute for the Theory of Computing at UC Berkeley
in the spring of 2023.
This work has also benefited greatly from discussions during the
Dagstuhl Seminar 22411
\emph{Theory and Practice of SAT and Combinatorial Solving}.

\bibliographystyle{plainurl}
\bibliography{bibliography,refArticles,refBooks}

\end{document}